\documentclass[a4paper,11pt]{article}
\usepackage{amsmath,amssymb,amsthm,mathtools}
\usepackage{comment}
\usepackage{mathrsfs}
\usepackage{braket}

\usepackage{geometry}
\geometry{top=25mm,bottom=25mm,left=25mm,right=25mm}
\usepackage{hyperref}
\hypersetup{
	unicode=true,
	colorlinks=true,
	linkcolor=black,
	citecolor=black,
	urlcolor=blue,
	pdftitle={Tensor products and units of partial Burnside rings},
	pdfauthor={Wakatake Masahiro}
}

\theoremstyle{plain}
\newtheorem{thm}{Theorem}[section]
\newtheorem{cor}[thm]{Corollary}
\newtheorem{prop}[thm]{Proposition}

\newtheorem{lem}[thm]{Lemma}

\theoremstyle{definition}
\newtheorem{dfn}[thm]{Definition}

\theoremstyle{remark}

\newcommand{\Z}{\mathbb{Z}} 
\newcommand{\B}[1]{\Omega(#1)}
\newcommand{\D}{\mathcal{D}} 
\renewcommand{\P}{\mathcal{P}} 
\newcommand{\Sub}[1]{{\rm S}({#1})}

\title{Tensor products and units of partial Burnside rings}
\author{Wakatake Masahiro}
\date{}

\begin{document}
\maketitle

\begin{abstract}
In this paper, we study tensor products of partial Burnside rings
relative to collections of subgroups of finite groups. We give a necessary and
sufficient condition for the canonical homomorphism from the direct
product of the unit groups to the unit group of the tensor product to
be surjective. We also describe the decomposition of the sign unit of a reducible finite Coxeter group.
\end{abstract}

{\small \noindent 
	\textbf{2020 Mathematics Subject Classification.}
	19A22, 20F55.\\
	\textbf{Keywords.}
	Burnside ring, Coxeter group, unit group, parabolic subgroups, tensor products.
}

\section{Introduction}
Let $G$ be a finite group. The unit group of the Burnside ring of $G$ has been studied (see \cite{Bo07}, \cite{ma82}, \cite{Di79}, \cite{yo90b}).
Yoshida introduced the theory of generalized Burnside rings (GBRs for short) with respect to a family of subgroups of $G$ (see \cite{yo90a}). In 2015, Idei and Oda studied unit groups of generalized Burnside rings relative to collections of subgroups (see \cite{io15}). Further results were obtained in \cite{oty16} and \cite{oty17}.

This paper presents results on unit groups of partial Burnside rings relative to reducible finite Coxeter groups. The cases of irreducible Coxeter groups of types $B$, $D$,
$\mathrm{E}_6$, $\mathrm{E}_7$, and $\mathrm{E}_8$ were studied
in \cite{oty17}, while the case of reducible Coxeter groups whose
irreducible components are of type $A$ was studied in \cite{ow}.

The Burnside ring of a finite group $G$ is defined to be the
Grothendieck ring of the semiring generated by isomorphism classes
of finite (left) $G$-sets, where addition and multiplication are
given by disjoint unions and Cartesian products, respectively.
Denote by $\B{G}$ the Burnside ring of $G$.

If a family $\D$ of subgroups of $G$ contains $G$ and is closed
under conjugation and intersection, then $\D$ is called a collection
of $G$. We call the Grothendieck ring of the semiring generated by
isomorphism classes of finite (left) $G$-sets such that the stabilizer
of each element lies in $\D$ the partial Burnside ring relative to
$\D$ of $G$. We denote it by $\B{G,\D}$.

Let $W$ be a finite Coxeter group with Coxeter system $(W,S)$. A subgroup $P$ of $W$ is called a parabolic subgroup if there exist $J\subseteq S$ and $g\in W$ such that $P=g^{-1}\braket{J}g$. The set of all parabolic subgroups of $W$ is a collection of $W$ (see \cite{so76}); we denote it by $\P_W$.

The paper is organized as follows. In Section 2, we recall the basic definitions and results on GBRs and PBRs. In Section 3, we consider the relationships among direct products of PBRs, tensor products of PBRs, and PBRs of direct products. Section 4 applies these results to parabolic partial Burnside rings of reducible finite Coxeter groups.

For a unital ring $R$, write $R^\times$ for the unit group of $R$. For the PBR of a finite Coxeter group relative to the set of all parabolic subgroups, the following lemma is well known.
\vspace{5pt}

\noindent\textbf{Lemma 4.2 (\cite{so66}).} 
	Let $W$ be a finite Coxeter group with Coxeter system $(W,S)$. Then the element
	\[
	\varepsilon_W:=\sum_{J\subseteq S}(-1)^{|J|}[W/\braket{J}]
	\] 
	lies in $\B{W,\P_W}^\times$. 
\vspace{5pt}

We call $\varepsilon_W$ a \emph{sign unit} of $\B{W,\P_W}$. 

If $W$ is an irreducible finite Coxeter group of type ${\rm A}, {\rm B}, {\rm D}, {\rm E}_6, {\rm E}_7$, or ${\rm E}_8$, then the unit group of the PBR relative to $\P_W$ has cardinality $4$ (see \cite{io15}, \cite{oty17}). 

The main result concerning unit groups is a necessary and sufficient
criterion for the surjectivity of the canonical homomorphism
\[
 \prod_{i=1}^{\ell}\B{G_i,\D_i}^{\times}
 \longrightarrow
 \left(\bigotimes_{i=1}^{\ell}\B{G_i,\D_i}\right)^{\times}.
\]
For reducible finite Coxeter groups, we also prove the product formula
for the sign unit.

\section{Preliminaries}
\subsection{Notation}
Let $G$ be a finite group. Denote by $\Sub{G}$ the set of subgroups of $G$. For a family $\D$ of subgroups of $G$ that is closed under $G$-conjugation, write $\D^c$ for the set of $G$-conjugacy classes contained in $\D$. Denote by $(H)$ the $G$-conjugacy class of $H$. Denote by $[X]$ the isomorphism class of a finite $G$-set $X$. For a family $\D$ of subgroups of $G$, a $G$-set $X$ is called a $(G,\D)$-set if the stabilizer of every element of $X$ lies in $\D$. If $X$ is a finite set, write $|X|$ for the cardinality of $X$. Denote by $\bigsqcup$ the disjoint union of sets.

\subsection{Partial Burnside rings}
Let $G$ be a finite group. Then the Burnside ring $\B{G}$ of $G$ can be regarded as a free abelian group with basis $\{[G/H]\mid(H)\in\Sub{G}^c\}$. The multiplication in the ring is given by 
\[
[G/H]\cdot[G/K]=\sum_{HgK\in H\backslash G/K} [G/(H\cap gKg^{-1})].
\]
	For a family $\D$ of subgroups of $G$ that is closed under conjugation, we put 
\[
\B{G,\D}:= \braket{[G/H]\mid (H)\in\D^c}_\Z.
\]

\begin{dfn}
	Let $G$ be a finite group and let $\D$ be a family of subgroups of $G$. We call $\D$ a \emph{collection of $G$} if $\D$ satisfies the following three conditions:
	\begin{itemize}
		\item $G\in\D$
		\item $H,K\in\D\Rightarrow H\cap K\in\D$
		\item $H\in\D,g\in G \Rightarrow gHg^{-1}\in \D$
	\end{itemize}
\end{dfn}
If $\D$ is a collection of $G$, then $\B{G,\D}$ is a subring of $\B{G}$ and is called a \emph{partial Burnside ring} (PBR for short) \emph{relative to $\D$ of $G$}. 

For $K\leq G$ and a $G$-set $X$, we set
\[
{\rm inv}_K(X):=\{x\in X\mid kx=x \text{ for all } k\in K\}.
\]
If $\D$ is a collection of $G$, then for each $K\in\D$, the $\Z$-linear map $\varphi^G_K:\B{G,\D}\rightarrow\Z$ defined by $[G/H]\mapsto \#{\rm inv}_K(G/H)$ is a ring homomorphism. By \cite[Theorem 3.10]{yo90a}, the map
\[
 \varphi^G=(\varphi^G_K)_{(K)\in\D^c}
 \colon \B{G,\D}\longrightarrow \prod_{(K)\in\D^c}\Z
\]
is an injective ring homomorphism. This map is called the Burnside homomorphism. Hence the unit group of every partial Burnside ring is an elementary abelian $2$-group.

\section{PBRs of direct product groups}\label{sec:direct-product-pbrs}
Let $G_t$ be a finite group, and let $\D_t$ be a collection of $G_t$ for each $t\in\{1,\ldots,\ell\}$. Then $\prod_{i=1}^\ell \D_i$ is a collection of $\prod_{i=1}^\ell G_i$.  

\begin{lem}\label{メイン命題}
	Let $G_t$ be a finite group, and let $\D_t$ be a collection of $G_t$ for $t=1,2$. Then
	\[
	 \B{G_1,\D_1}\otimes_{\Z}\B{G_2,\D_2}\simeq \B{G_1\times G_2, \D_1\times\D_2}
	 \]
	 as rings.
\end{lem}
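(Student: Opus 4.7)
The plan is to define a $\Z$-linear map $\Phi$ on elementary tensors of basis elements by
\[
\Phi\bigl([G_1/H_1]\otimes[G_2/H_2]\bigr):=[(G_1\times G_2)/(H_1\times H_2)],
\]
where the right-hand side is identified with the $(G_1\times G_2)$-set $G_1/H_1\times G_2/H_2$ under the componentwise action. Since the assignment is $\Z$-bilinear on the $\Z$-bases indexed by $\D_1^c$ and $\D_2^c$, it extends uniquely to a group homomorphism out of the tensor product.

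First I would check that $\Phi$ is an isomorphism of abelian groups. Every element of $\D_1\times \D_2$ has the form $H_1\times H_2$ with $H_i\in\D_i$, and because conjugation in $G_1\times G_2$ acts componentwise, two such subgroups are $(G_1\times G_2)$-conjugate iff each $H_i$ is $G_i$-conjugate to the corresponding $K_i$. Hence $(\D_1\times\D_2)^c$ is in natural bijection with $\D_1^c\times\D_2^c$, and $\Phi$ sends the $\Z$-basis of the tensor product bijectively to the $\Z$-basis of $\B{G_1\times G_2,\D_1\times\D_2}$.

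Next I would verify that $\Phi$ preserves multiplication. Applying the product formula in each $\B{G_i,\D_i}$ and then $\Phi$, one gets a double sum over $(H_1\backslash G_1/K_1)\times(H_2\backslash G_2/K_2)$, while expanding $[(G_1\times G_2)/(H_1\times H_2)]\cdot[(G_1\times G_2)/(K_1\times K_2)]$ directly in $\B{G_1\times G_2,\D_1\times\D_2}$ gives a sum over $(H_1\times H_2)\backslash(G_1\times G_2)/(K_1\times K_2)$. These are identified termwise by the bijection
\[
(H_1\backslash G_1/K_1)\times(H_2\backslash G_2/K_2)\;\cong\;(H_1\times H_2)\backslash(G_1\times G_2)/(K_1\times K_2)
\]
together with the equality $(H_1\times H_2)\cap (g_1,g_2)(K_1\times K_2)(g_1,g_2)^{-1}=(H_1\cap g_1K_1g_1^{-1})\times(H_2\cap g_2K_2g_2^{-1})$.

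A conceptually smoother alternative, which I would fall back on if this direct comparison grows unwieldy, is to compose with the injective Burnside (mark) homomorphism $\varphi^G$ from Yoshida's theorem: the identity $\#{\rm inv}_{K_1\times K_2}(G_1/H_1\times G_2/H_2)=\#{\rm inv}_{K_1}(G_1/H_1)\cdot\#{\rm inv}_{K_2}(G_2/H_2)$ shows that the marks on both sides factor through the natural product map $\prod_{\D_1^c}\Z\otimes\prod_{\D_2^c}\Z\to\prod_{\D_1^c\times\D_2^c}\Z$, reducing multiplicativity to that of integers. Either way, the main obstacle is not conceptual but a careful book-keeping: keeping the parametrizations of conjugacy classes and double cosets straight so that the componentwise product structure on $\D_1\times\D_2$ is seen to match the one coming from the tensor product.
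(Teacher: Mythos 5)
Your proposal is correct and follows essentially the same route as the paper: define the natural map sending $[G_1/H_1]\otimes[G_2/H_2]$ to $[(G_1\times G_2)/(H_1\times H_2)]$, observe that it carries the $\Z$-basis indexed by $\D_1^c\times\D_2^c$ bijectively onto the basis of $\B{G_1\times G_2,\D_1\times\D_2}$, and verify multiplicativity. The only cosmetic difference is that the paper obtains multiplicativity from the $G$-set isomorphism $(X_1\times X_1')\times(X_2\times X_2')\cong(X_1\times X_2)\times(X_1'\times X_2')$ rather than from your double-coset (or mark-homomorphism) computation, but all of these are routine verifications of the same fact.
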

\begin{proof}
Put $G:=G_1\times G_2$ and $\D:=\D_1\times\D_2$.  The
correspondence
\[
 (X_1,X_2)\longmapsto X_1\times X_2
\]
is biadditive and multiplicative, and therefore induces a ring
homomorphism
\[
 \pi_2:\B{G_1,\D_1}\otimes_{\Z}\B{G_2,\D_2}
 \longrightarrow \B{G,\D}.
\]
The source is a free abelian group with basis
\[
 \bigl\{[G_1/H_1]\otimes[G_2/H_2]
       \mid (H_i)\in\D_i^c,\ i=1,2\bigr\},
\]
and the target is a free abelian group with basis
\[
 \bigl\{[G/(H_1\times H_2)]
       \mid (H_i)\in\D_i^c,\ i=1,2\bigr\}.
\]
For every pair $(H_1,H_2)$ one has
\[
 \pi_2([G_1/H_1]\otimes[G_2/H_2])
 =[(G_1\times G_2)/(H_1\times H_2)].
\]
Thus $\pi_2$ maps a basis of the source bijectively onto a basis of
the target. Hence it is a ring isomorphism.
\end{proof}
In the rest of this section, let $G_t$ be a finite group, and let $\D_t$ be a collection of $G_t$ for $t=1,\ldots,\ell$.

Lemma $\ref{メイン命題}$  shows the following.
\begin{cor}\label{メイン定理A}
	The map
	\[
	\pi_\ell : \bigotimes_{i=1}^{\ell}\B{G_i,\D_i}\longrightarrow\B{\prod_{i=1}^{\ell} G_i, \prod_{i=1}^{\ell} \D_i}
	\]
	induced by the multilinear map
	\[
	 \prod_{i=1}^{\ell}\B{G_i,\D_i}\longrightarrow
	 \B{\prod_{i=1}^{\ell}G_i,\prod_{i=1}^{\ell}\D_i}
	\]
	defined on classes of finite $G_i$-sets by
	\[
	 ([X_1],\ldots,[X_\ell])\longmapsto
	 [X_1\times\cdots\times X_\ell]
	\]
	is a ring isomorphism.
\end{cor}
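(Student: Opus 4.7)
The plan is induction on $\ell$, with Lemma \ref{メイン命題} supplying both the base case and the recursive ingredient. For $\ell = 2$, the statement is exactly Lemma \ref{メイン命題}. Before running the induction, note that the remark at the opening of Section \ref{Direct product of PBRs} ensures that $\prod_{i=1}^{k} \D_i$ is a collection of $\prod_{i=1}^{k} G_i$ for every $k$, so that Lemma \ref{メイン命題} can legitimately be re-applied at each stage.

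For the inductive step, assume the result for $\ell - 1$. Using associativity of the tensor product, write
\[
\bigotimes_{i=1}^\ell \B{G_i,\D_i} \;\cong\; \left(\bigotimes_{i=1}^{\ell-1} \B{G_i,\D_i}\right) \otimes_\Z \B{G_\ell,\D_\ell}.
\]
The inductive hypothesis identifies the first tensor factor with $\B{\prod_{i=1}^{\ell-1} G_i,\prod_{i=1}^{\ell-1} \D_i}$ via a ring isomorphism. Now apply Lemma \ref{メイン命題} to the pair of groups $\prod_{i=1}^{\ell-1} G_i$ and $G_\ell$ equipped with their product collections. This produces a ring isomorphism onto $\B{\prod_{i=1}^{\ell} G_i,\prod_{i=1}^{\ell} \D_i}$. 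Composing these two ring isomorphisms yields a ring isomorphism of the desired source and target.

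It remains to confirm that the composed isomorphism is in fact $\pi_\ell$, the map described in the statement. Since the pure tensors $[G_1/H_1]\otimes\cdots\otimes[G_\ell/H_\ell]$ with $(H_t)\in\D_t^c$ form a $\Z$-basis of $\bigotimes_{i=1}^{\ell}\B{G_i,\D_i}$, it suffices to compare the two maps on these generators. Tracking such a pure tensor through the two isomorphisms produced above, and invoking the explicit calculation $\pi_2([G_1/H_1]\otimes[G_2/H_2]) = [(G_1\times G_2)/(H_1\times H_2)]$ from the proof of Lemma \ref{メイン命題}, one sees inductively that it is sent to $[(G_1\times\cdots\times G_\ell)/(H_1\times\cdots\times H_\ell)]$, which coincides with the value of $\pi_\ell$ on the same element. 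There is no genuine obstacle in this argument; the combinatorial content was already discharged in Lemma \ref{メイン命題}, and the main point requiring care is merely the bookkeeping that keeps track, at each inductive stage, of both the collection structure on the product and the fact that every intermediate map is a ring homomorphism rather than merely an abelian group homomorphism.
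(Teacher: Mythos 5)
Your proof is correct and follows the same route the paper intends: the paper simply states that Lemma \ref{メイン命題} "shows" the corollary, leaving the induction on $\ell$ via associativity of the tensor product implicit, which is exactly what you spell out. Your additional check that the composite agrees with $\pi_\ell$ on the basis of pure tensors is a reasonable piece of bookkeeping that the paper omits.
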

Put
\[
 A_i:=\B{G_i,\D_i}\quad (i=1,\ldots,\ell),
 \qquad
 A:=\bigotimes_{i=1}^{\ell}A_i.
\]
For
\[
 \boldsymbol{H}=((H_1),\ldots,(H_\ell))
 \in\prod_{i=1}^{\ell}\D_i^c,
\]
define
\[
 \varphi_{\boldsymbol{H}}
 :=
 \varphi^{G_1}_{H_1}\otimes\cdots\otimes
 \varphi^{G_\ell}_{H_\ell}
 \colon A\longrightarrow\Z.
\]

\begin{lem}\label{tensor-Burnside-homomorphism}
The map
\[
 \Phi_A
 :=
 \prod_{\boldsymbol{H}\in\prod_{i=1}^{\ell}\D_i^c}
 \varphi_{\boldsymbol{H}}
 \colon
 A\longrightarrow
 \prod_{\boldsymbol{H}\in\prod_{i=1}^{\ell}\D_i^c}\Z
\]
is an injective ring homomorphism. Moreover, for $x\in A$, the
following conditions are equivalent:
\begin{itemize}
 \item[$(1)$] $x\in A^\times$;
 \item[$(2)$] $\varphi_{\boldsymbol{H}}(x)\in\{1,-1\}$ for every
 $\boldsymbol{H}\in\prod_{i=1}^{\ell}\D_i^c$.
\end{itemize}
In particular, $A^\times$ is a finite elementary abelian $2$-group.
\end{lem}

\begin{proof}
For each $i$, the Burnside homomorphism
\[
 \varphi^{G_i}\colon
 A_i\longrightarrow\prod_{(H_i)\in\D_i^c}\Z
\]
is injective. All groups occurring here are free abelian and hence
flat over $\Z$. Factoring the tensor product map into maps which
apply one $\varphi^{G_i}$ at a time therefore shows that
\[
 \bigotimes_{i=1}^{\ell}\varphi^{G_i}\colon
 \bigotimes_{i=1}^{\ell}A_i
 \longrightarrow
 \bigotimes_{i=1}^{\ell}
 \left(\prod_{(H_i)\in\D_i^c}\Z\right)
\]
is injective.
Under the natural identification
\[
 \bigotimes_{i=1}^{\ell}
 \left(\prod_{(H_i)\in\D_i^c}\Z\right)
 \simeq
 \prod_{\boldsymbol{H}\in\prod_{i=1}^{\ell}\D_i^c}\Z,
\]
this homomorphism is $\Phi_A$.

If $x\in A^\times$, then every $\varphi_{\boldsymbol{H}}(x)$ is a
unit of $\Z$, and hence belongs to $\{1,-1\}$. Conversely, suppose
that all $\varphi_{\boldsymbol{H}}(x)$ belong to $\{1,-1\}$. Then
\[
 \varphi_{\boldsymbol{H}}(x^2)=1
 =\varphi_{\boldsymbol{H}}(1_A)
\]
for every $\boldsymbol{H}$. The injectivity of $\Phi_A$ gives
$x^2=1_A$, so $x$ is a unit and $x^{-1}=x$.
\end{proof}

Define
\[
 \mu_\ell\colon
 \prod_{i=1}^{\ell}A_i^\times
 \longrightarrow A^\times,
 \qquad
 (u_1,\ldots,u_\ell)
 \longmapsto
 u_1\otimes\cdots\otimes u_\ell.
 \tag{3.1}
\]
This is a group homomorphism. Note that the analogous map on the
underlying rings is not additive in general.

\begin{lem}\label{kernel-unit-tensor-map}
The kernel of $\mu_\ell$ is
\[
 \ker\mu_\ell
 =
 \left\{
  (\epsilon_1 1_{A_1},\ldots,\epsilon_\ell 1_{A_\ell})
  \mathrel{}\middle|\mathrel{}
  \epsilon_i\in\{1,-1\},\quad
  \prod_{i=1}^{\ell}\epsilon_i=1
 \right\}.
\]
Consequently,
\[
 |\ker\mu_\ell|=2^{\ell-1}.
\]
\end{lem}

\begin{proof}
Let $(u_1,\ldots,u_\ell)\in\ker\mu_\ell$. By Lemma
$\ref{tensor-Burnside-homomorphism}$, for every
$\boldsymbol{H}=((H_1),\ldots,(H_\ell))$ one has
\[
 \prod_{i=1}^{\ell}\varphi^{G_i}_{H_i}(u_i)=1.
 \tag{3.2}
\]
Fix $j$, fix $H_i\in\D_i$ for $i\neq j$, and take
$H,H'\in\D_j$. Applying (3.2) with $H_j=H$ and with $H_j=H'$ gives
\[
 \varphi^{G_j}_{H}(u_j)=\varphi^{G_j}_{H'}(u_j).
\]
Thus all marks of $u_j$ have the same value
$\epsilon_j\in\{1,-1\}$. By the injectivity of the Burnside
homomorphism,
\[
 u_j=\epsilon_j1_{A_j}.
\]
Equation (3.2) then gives $\prod_{j=1}^{\ell}\epsilon_j=1$.
The converse inclusion is immediate. There are $2^{\ell-1}$ sign
vectors with product $1$.
\end{proof}

For each $j\in\{1,\ldots,\ell\}$, define the injective ring
homomorphism
\[
 f_j:\B{G_j,\D_j}\longrightarrow
 \B{\prod_{i=1}^{\ell}G_i,\prod_{i=1}^{\ell}\D_i}
\]
by
\[
 f_j([G_j/H])=
 \left[\left(\prod_{i=1}^{\ell}G_i\right)/\widehat H_j\right],
 \qquad
 \widehat H_j:=G_1\times\cdots\times G_{j-1}\times H\times
 G_{j+1}\times\cdots\times G_\ell.
\]
Equivalently, under the isomorphism $\pi_\ell$ of Corollary
$\ref{メイン定理A}$, the map $f_j$ is obtained by inserting $x$ in
the $j$-th tensor factor and the identity element in all other
factors. Its injectivity follows from its action on the standard
transitive-set basis.

\begin{lem}\label{バーンサイド準同型の分解補題}
If $x_j\in\B{G_j,\D_j}$ and $H_i\in\D_i$ for
$i=1,\ldots,\ell$, then
\[
 \varphi^G_{\prod_{i=1}^{\ell}H_i}(f_j(x_j))
 =\varphi^{G_j}_{H_j}(x_j),
\]
where $G=\prod_{i=1}^{\ell}G_i$.
\end{lem}

\begin{proof}
It suffices to take $x_j=[G_j/K]$ with $K\in\D_j$. As a $G$-set,
\[
 G/\widehat K_j\simeq
 (G_1/G_1)\times\cdots\times(G_{j-1}/G_{j-1})
 \times(G_j/K)\times
 (G_{j+1}/G_{j+1})\times\cdots\times(G_\ell/G_\ell).
\]
Therefore
\[
 \left|(G/\widehat K_j)^{\prod_{i=1}^{\ell}H_i}\right|
 =\left|(G_j/K)^{H_j}\right|
 =\varphi^{G_j}_{H_j}([G_j/K]).
\]
The assertion follows by $\Z$-linearity.
\end{proof}
We first give an elementary criterion for a sign array to be a
product of one-variable sign functions.

\begin{lem}\label{rectangle-sign-lemma}
Let $I_1,\ldots,I_\ell$ be nonempty sets and let
\[
 s\colon I_1\times\cdots\times I_\ell\longrightarrow\{1,-1\}.
\]
The following conditions are equivalent:
\begin{itemize}
 \item[$(1)$] There exist maps
 $s_i\colon I_i\longrightarrow\{1,-1\}$ such that
 \[
  s(i_1,\ldots,i_\ell)=\prod_{i=1}^{\ell}s_i(i_i).
 \]

 \item[$(2)$] For every $1\leq p<q\leq\ell$, every fixed choice of
 the remaining coordinates, and every
 $i_p,i'_p\in I_p$ and $i_q,i'_q\in I_q$, one has
 \[
 \begin{split}
 &s(\ldots,i_p,\ldots,i_q,\ldots)
  s(\ldots,i'_p,\ldots,i'_q,\ldots)\\
 &\qquad=
  s(\ldots,i_p,\ldots,i'_q,\ldots)
  s(\ldots,i'_p,\ldots,i_q,\ldots).
 \end{split}
 \tag{R}
 \]
 We refer to these relations as $(R)$.

 \item[$(3)$] Fix a base point
 $\boldsymbol{i}^0=(i_1^0,\ldots,i_\ell^0)$. Then, for every
 $(i_1,\ldots,i_\ell)$,
 \[
 \begin{split}
 s(i_1,\ldots,i_\ell)
 &=s(\boldsymbol{i}^0)^{\ell-1}
 \prod_{t=1}^{\ell}
 s(i_1^0,\ldots,i_{t-1}^0,i_t,
   i_{t+1}^0,\ldots,i_\ell^0).
 \end{split}
 \tag{3.3}
 \]
\end{itemize}
\end{lem}

\begin{proof}
Condition $(1)$ immediately implies all relations $(R)$.

Suppose that $(R)$ holds. Write
\[
 s(\boldsymbol{i})=(-1)^{c(\boldsymbol{i})},
 \qquad c(\boldsymbol{i})\in\Z/2\Z.
\]
The relations $(R)$ say that every mixed difference of $c$
vanishes. Fix $\boldsymbol{i}^0$ and put
\[
 d_t(a):=
 c(i_1^0,\ldots,i_{t-1}^0,a,i_{t+1}^0,\ldots,i_\ell^0)
 -c(\boldsymbol{i}^0).
\]
An induction on the number of coordinates in which
$\boldsymbol{i}$ differs from $\boldsymbol{i}^0$ gives
\[
 c(\boldsymbol{i})-c(\boldsymbol{i}^0)
 =\sum_{t=1}^{\ell}d_t(i_t).
\]
This is equivalent to (3.3), so $(2)$ implies $(3)$.

Finally, (3.3) gives a decomposition as in $(1)$; for example, take
\[
 \begin{split}
 s_1(i_1)&:=
 s(\boldsymbol{i}^0)^{\ell-1}s(i_1,i_2^0,\ldots,i_\ell^0),\\
 s_t(i_t)&:=
 s(i_1^0,\ldots,i_{t-1}^0,i_t,i_{t+1}^0,\ldots,i_\ell^0)
 \quad (t\geq2).
 \end{split}
\]
\end{proof}

For $x\in A^\times$, define its sign array by
\[
 s_x((H_1),\ldots,(H_\ell))
 :=\varphi_{((H_1),\ldots,(H_\ell))}(x).
\]
Fix
\[
 \boldsymbol{H}^0=(H_1^0,\ldots,H_\ell^0)
 \in\prod_{i=1}^{\ell}\D_i^c.
\]
For each $t$, define the ring homomorphism
\[
 \begin{split}
 \operatorname{ev}^{(t)}_{\boldsymbol{H}^0}
 &:=
 \varphi^{G_1}_{H_1^0}\otimes\cdots\otimes
 \varphi^{G_{t-1}}_{H_{t-1}^0}\otimes
 {\rm id}_{A_t}\otimes
 \varphi^{G_{t+1}}_{H_{t+1}^0}\otimes\cdots\otimes
 \varphi^{G_\ell}_{H_\ell^0}\\
 &\colon A\longrightarrow A_t.
 \end{split}
\]

\begin{prop}\label{unit-tensor-surjectivity-criterion}
The following conditions are equivalent:
\begin{itemize}
 \item[$(1)$] The homomorphism
 \[
  \mu_\ell\colon\prod_{i=1}^{\ell}A_i^\times\longrightarrow A^\times
 \]
 is surjective.

 \item[$(2)$] For every $x\in A^\times$, the sign array $s_x$ satisfies all relations $(R)$ in Lemma
 $\ref{rectangle-sign-lemma}$.

 \item[$(3)$] For every $x\in A^\times$, put
 \[
  x_t:=\operatorname{ev}^{(t)}_{\boldsymbol{H}^0}(x)\in A_t^\times,
  \qquad
  \epsilon_x:=s_x(\boldsymbol{H}^0)\in\{1,-1\}.
 \]
 Then
 \[
  x=\epsilon_x^{\ell-1}
  (x_1\otimes\cdots\otimes x_\ell).
  \tag{3.4}
 \]
\end{itemize}
\end{prop}

\begin{proof}
Suppose that $\mu_\ell$ is surjective and write
$x=u_1\otimes\cdots\otimes u_\ell$. Then
\[
 s_x((H_1),\ldots,(H_\ell))
 =\prod_{i=1}^{\ell}\varphi^{G_i}_{H_i}(u_i),
\]
so $s_x$ satisfies all relations $(R)$. Thus $(1)$ implies
$(2)$.

Suppose that $(2)$ holds. Since
$\operatorname{ev}^{(t)}_{\boldsymbol{H}^0}$ is a unital ring
homomorphism, $x_t$ is a unit of $A_t$. Moreover,
\[
 \varphi^{G_t}_{H_t}(x_t)
 =s_x(H_1^0,\ldots,H_{t-1}^0,H_t,H_{t+1}^0,\ldots,H_\ell^0).
\]
Lemma $\ref{rectangle-sign-lemma}$ therefore gives, for every
$\boldsymbol{H}=((H_1),\ldots,(H_\ell))$,
\[
 \begin{split}
 s_x(\boldsymbol{H})
 &=\epsilon_x^{\ell-1}
   \prod_{t=1}^{\ell}\varphi^{G_t}_{H_t}(x_t)\\
 &=\varphi_{\boldsymbol{H}}
   \left(\epsilon_x^{\ell-1}
   (x_1\otimes\cdots\otimes x_\ell)\right).
 \end{split}
\]
The injectivity of $\Phi_A$ gives (3.4). Hence $(2)$ implies $(3)$.
Finally, the right-hand side of (3.4) lies in the image of
$\mu_\ell$, so $(3)$ implies $(1)$.
\end{proof}

\begin{cor}\label{two-factor-rectangle-criterion}
For $\ell=2$, the homomorphism
\[
 \mu_2\colon A_1^\times\times A_2^\times
 \longrightarrow(A_1\otimes A_2)^\times
\]
is surjective if and only if, for every
$x\in(A_1\otimes A_2)^\times$,
\[
 \begin{split}
 &(\varphi^{G_1}_{H}\otimes\varphi^{G_2}_{K})(x)
  (\varphi^{G_1}_{H'}\otimes\varphi^{G_2}_{K'})(x)\\
 &\qquad=
  (\varphi^{G_1}_{H}\otimes\varphi^{G_2}_{K'})(x)
  (\varphi^{G_1}_{H'}\otimes\varphi^{G_2}_{K})(x)
 \end{split}
 \tag{3.5}
\]
for all $H,H'\in\D_1$ and $K,K'\in\D_2$.
\end{cor}

\begin{cor}\label{メイン定理B}
One always has
\[
 \left|
 \B{\prod_{i=1}^{\ell}G_i,\prod_{i=1}^{\ell}\D_i}^\times
 \right|
 \geq
 \frac{1}{2^{\ell-1}}
 \prod_{i=1}^{\ell}|\B{G_i,\D_i}^\times|.
 \tag{3.6}
\]
Moreover, the following conditions are equivalent:
\begin{itemize}
 \item[$(1)$] $\mu_\ell$ is surjective;
 \item[$(2)$] every unit of $A$ satisfies all relations $(R)$ in Proposition $\ref{unit-tensor-surjectivity-criterion}$;
 \item[$(3)$]
 \[
 \left|
 \B{\prod_{i=1}^{\ell}G_i,\prod_{i=1}^{\ell}\D_i}^\times
 \right|
 =
 \frac{1}{2^{\ell-1}}
 \prod_{i=1}^{\ell}|\B{G_i,\D_i}^\times|.
 \tag{3.7}
 \]
\end{itemize}
If these equivalent conditions hold and
\[
 \B{G_i,\D_i}^\times
 =\braket{-1_{\B{G_i}},u_1^{(i)},\ldots,u_{r_i}^{(i)}}
\]
for every $i$, then
\[
 \begin{split}
 \B{\prod_{i=1}^{\ell}G_i,\prod_{i=1}^{\ell}\D_i}^\times
 =\left\langle
 \{-1_{\B{G}}\}\cup
 \bigcup_{i=1}^{\ell}
 \{f_i(u_1^{(i)}),\ldots,f_i(u_{r_i}^{(i)})\}
 \right\rangle,
 \end{split}
 \tag{3.8}
\]
where $G=\prod_{i=1}^{\ell}G_i$.
\end{cor}

\begin{proof}
By Corollary $\ref{メイン定理A}$, the ring $A$ is isomorphic to
\[
 \B{\prod_{i=1}^{\ell}G_i,\prod_{i=1}^{\ell}\D_i}.
\]
Lemma $\ref{kernel-unit-tensor-map}$ gives
\[
 |\operatorname{Im}\mu_\ell|
 =\frac{1}{2^{\ell-1}}
  \prod_{i=1}^{\ell}|A_i^\times|.
\]
Since $\operatorname{Im}\mu_\ell\leq A^\times$, inequality (3.6)
holds, and equality holds if and only if $\mu_\ell$ is surjective.
The equivalence with the condition $(R)$ follows from Proposition
$\ref{unit-tensor-surjectivity-criterion}$.

Under the isomorphism $\pi_\ell$ of Corollary
$\ref{メイン定理A}$, one has
\[
 \pi_\ell(u_1\otimes\cdots\otimes u_\ell)
 =\prod_{i=1}^{\ell}f_i(u_i).
\]
Therefore surjectivity, together with the given generating sets of
the $A_i^\times$, yields (3.8).
\end{proof}

For completeness, the criterion can also be expressed entirely in
terms of the mark matrices. Choose an ordering of each $\D_i^c$, let
$M_i$ be the corresponding mark matrix of $A_i$, and put
\[
 M:=M_1\otimes\cdots\otimes M_\ell.
\]

\begin{cor}\label{mark-matrix-surjectivity-criterion}
The homomorphism $\mu_\ell$ is surjective if and only if every sign
array
\[
 \epsilon\in
 \{1,-1\}^{\D_1^c\times\cdots\times\D_\ell^c}
\]
satisfying
\[
 M^{-1}\epsilon\in
 \Z^{|\D_1^c|\cdots|\D_\ell^c|}
 \tag{3.9}
\]
satisfies all relations $(R)$.
\end{cor}

\begin{proof}
With respect to the standard transitive-set bases, the matrix of
$\Phi_A$ is $M$. Hence a sign array $\epsilon$ is the mark array of
an element of $A$ if and only if $M^{-1}\epsilon$ has integral
entries. By Lemma $\ref{tensor-Burnside-homomorphism}$, every such
element is a unit. The assertion now follows from Proposition
$\ref{unit-tensor-surjectivity-criterion}$.
\end{proof}

We record a sufficient condition which will be used for parabolic
partial Burnside rings.

\begin{lem}\label{two-factor-small-unit-surjectivity}
Let
\[
 A=\B{G,\D},\qquad B=\B{K,\mathcal E}.
\]
Assume that
\[
 \operatorname{Idem}(A)=\operatorname{Idem}(B)=\{0,1\}
 \qquad\text{and}\qquad |B^\times|\leq 4.
\]
Then the canonical homomorphism
\[
 A^\times\times B^\times\longrightarrow(A\otimes B)^\times,
 \qquad (u,v)\longmapsto u\otimes v,
\]
is surjective. Moreover,
\[
 \operatorname{Idem}(A\otimes B)=\{0,1\}.
\]
\end{lem}

\begin{proof}
Every unit of a partial Burnside ring has square one. Suppose
first that $B^\times=\{1,-1\}$. For $z\in(A\otimes B)^\times$ and
$H\in\D^c$, the element
\[
 (\varphi_H^G\otimes\operatorname{id}_B)(z)
\]
is either $1$ or $-1$. Hence the $H$-th row of the sign matrix of
$z$ is constant. All relations $(R)$ therefore hold, and
Corollary $\ref{two-factor-rectangle-criterion}$ gives
surjectivity.

Suppose that $|B^\times|=4$, and choose
$v\in B^\times\setminus\{1,-1\}$. Then
\[
 B^\times=\braket{-1,v}.
\]
Write
\[
 \varphi^K_L(v)=(-1)^{q_L},\qquad q_L\in\mathbb F_2=\{0,1\},
\]
where we identify $\mathbb F_2$ with $\{0,1\}$.
The function $L\mapsto q_L$ is not constant, since otherwise the
injectivity of the Burnside homomorphism would give $v=\pm1$.

Let $z\in(A\otimes B)^\times$. For every $H\in\D^c$, the element
\[
 (\varphi^G_H\otimes\operatorname{id}_B)(z)
\]
is a unit of $B$. Hence there are $a_H,b_H\in\mathbb F_2$ such that
\[
 (\varphi^G_H\otimes\varphi^K_L)(z)
 =(-1)^{a_H+b_H q_L}
 \tag{3.10}
\]
for every $L\in\mathcal E^c$. Choose $L_0,L_1\in\mathcal E^c$ with
$q_{L_0}=0$ and $q_{L_1}=1$, and put
\[
 a=(\operatorname{id}_A\otimes\varphi^K_{L_0})(z),\qquad
 a'=(\operatorname{id}_A\otimes\varphi^K_{L_1})(z),\qquad
 u=aa'.
\]
Then $a,a',u\in A^\times$ and
\[
 \varphi^G_H(a)=(-1)^{a_H},\qquad
 \varphi^G_H(u)=(-1)^{b_H}.
\]
After multiplying $z$ by $a\otimes1$, we may therefore assume that
its marks are $(-1)^{b_H q_L}$.

In $(A\otimes B)\otimes\mathbb Q$, the element with these marks is
\[
 1-\frac{(1-u)\otimes(1-v)}{2}.
 \tag{3.11}
\]
Indeed, both sides have the same marks, and the rationalized Burnside
homomorphism is injective. Since the element in (3.11) belongs to
$A\otimes B$, one has
\[
 (1-u)\otimes(1-v)\in2(A\otimes B).
\]
Since $A$ and $B$ are free abelian, reduction modulo $2$
identifies
\[
 (A\otimes_{\Z}B)/2(A\otimes_{\Z}B)
 \simeq (A/2A)\otimes_{\mathbb F_2}(B/2B).
\]
Thus reducing modulo $2$ gives
\[
 \overline{1-u}\otimes\overline{1-v}=0
 \quad\text{in}\quad
 (A/2A)\otimes_{\mathbb F_2}(B/2B).
\]
A pure tensor over a field is zero only if one of its factors is
zero. Thus either $1-u\in2A$ or $1-v\in2B$.

In the first case, $(1-u)/2$ is an idempotent of $A$, because
$u^2=1$ and $A$ is torsion-free. Hence $u=\pm1$. Similarly, the
second case gives $v=\pm1$. In either case (3.11) is a pure tensor:
it is one of
\[
 1\otimes1,\qquad 1\otimes v,\qquad u\otimes1.
\]
Consequently $z$ is a pure tensor of units, and the canonical
homomorphism is surjective.

It remains to prove the assertion about idempotents. Let
$e\in A\otimes B$ satisfy $e^2=e$. Then $w=1-2e$ is a unit and
$w\equiv1\pmod{2(A\otimes B)}$. By surjectivity, write $w=u\otimes v$
with $u\in A^\times$ and $v\in B^\times$. Modulo $2$,
\[
 \bar u\otimes\bar v=\bar1\otimes\bar1.
\]
The reductions $\bar u$ and $\bar v$ are nonzero because they
are units. Equality of nonzero pure tensors over a field implies that
the corresponding factors differ by a nonzero scalar. Since
$\mathbb F_2^\times=\{1\}$, it follows that
$u\equiv1\pmod{2A}$ and $v\equiv1\pmod{2B}$. Therefore
$(1-u)/2$ and $(1-v)/2$ are idempotents. By hypothesis,
$u,v\in\{1,-1\}$, so $w=\pm1$ and hence $e=0$ or $1$.
\end{proof}

\begin{cor}\label{small-unit-surjectivity}
For $i=1,\ldots,\ell$, let $A_i=\B{G_i,\D_i}$. Assume that
\[
 \operatorname{Idem}(A_i)=\{0,1\}
 \qquad\text{and}\qquad
 |A_i^\times|\leq4
\]
for every $i$. Then
\[
 \mu_\ell\colon\prod_{i=1}^{\ell}A_i^\times
 \longrightarrow
 \left(\bigotimes_{i=1}^{\ell}A_i\right)^\times
\]
is surjective. Moreover,
\[
 \operatorname{Idem}\left(\bigotimes_{i=1}^{\ell}A_i\right)
 =\{0,1\}.
\]
\end{cor}

\begin{proof}
Set $C_1=A_1$ and, for $j\geq2$, set
\[
 C_j:=A_1\otimes\cdots\otimes A_j.
\]
By Corollary $\ref{メイン定理A}$, each $C_j$ is isomorphic to a
partial Burnside ring. Assume inductively that
$\operatorname{Idem}(C_{j-1})=\{0,1\}$ and that the canonical map
\[
 \prod_{i=1}^{j-1}A_i^\times\longrightarrow C_{j-1}^\times
\]
is surjective. Applying Lemma
$\ref{two-factor-small-unit-surjectivity}$ to $C_{j-1}$ and $A_j$
shows that
\[
 C_{j-1}^\times\times A_j^\times\longrightarrow C_j^\times
\]
is surjective and that $\operatorname{Idem}(C_j)=\{0,1\}$. Composing
the two surjections completes the induction.
\end{proof}

\section{PBR of a reducible finite Coxeter group}
\begin{dfn}
	Let $W$ be a finite Coxeter group with Coxeter system $(W,S)$. Then a subgroup $P$ of $W$ is called a parabolic subgroup if there exists $J\subseteq S$ such that $(P)=(\braket{J})$. Denote by $\P_W$ the set of all parabolic subgroups of $W$.
\end{dfn}
Let $W$ be a finite Coxeter group. Then the set $\P_W$ of all parabolic subgroups of $W$ is a collection (see \cite{so76}). Thus $\B{W,\P_W}$ is the PBR of $W$ relative to $\P_W$. 
For the PBR of a finite Coxeter group relative to the set of all parabolic subgroups, the following lemma is well known.
\begin{lem}[\cite{so66}]\label{Signunit}
	Let $W$ be a finite Coxeter group with Coxeter system $(W,S)$. Then the element
	\[
	\varepsilon_W:=\sum_{J\subseteq S}(-1)^{|J|}[W/\braket{J}]
	\] 
	lies in $\B{W,\P_W}^\times$. Moreover, for $P\in\P_W$ with $(P)=(\braket{J})$,
	\[
	\varphi^W_P(\varepsilon_W)=(-1)^{|J|}.
	\]
\end{lem}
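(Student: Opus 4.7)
The plan is to reduce the unit claim to the explicit character formula in the ``moreover'' clause, and then to prove that formula by identifying the relevant alternating sum with the reduced Euler characteristic of the subcomplex of $\braket{K}$-fixed points of the Coxeter complex.

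First I would invoke the injectivity of the Burnside homomorphism $\varphi^W=(\varphi^W_P)_{(P)\in\P_W^c}:\B{W,\P_W}\hookrightarrow\prod_{(P)\in\P_W^c}\Z$ recorded at the end of Section~2: $\varepsilon_W$ lies in $\B{W,\P_W}^\times$ if and only if $\varphi^W_P(\varepsilon_W)\in\{\pm 1\}$ for every $P\in\P_W$. Since $(-1)^{|J|}\in\{\pm 1\}$, it therefore suffices to establish the second assertion; the unit claim then follows at once. To prove the formula, I would fix $P\in\P_W$ with $(P)=(\braket{K})$ and, since $\varphi^W_P$ depends only on the conjugacy class $(P)$, compute with $P=\braket{K}$. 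Expanding by linearity gives
\[
\varphi^W_{\braket{K}}(\varepsilon_W)=\sum_{J\subseteq S}(-1)^{|J|}\bigl|\mathrm{inv}_{\braket{K}}(W/\braket{J})\bigr|,
\]
where $|\mathrm{inv}_{\braket{K}}(W/\braket{J})|$ counts cosets $w\braket{J}$ with $w^{-1}\braket{K}w\subseteq\braket{J}$.

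The central step is to recognise this alternating sum as a reduced Euler characteristic. In the Coxeter complex $\Sigma=\Sigma(W,S)$ the cosets of the standard parabolic $\braket{J}$ are precisely the simplices of dimension $|S|-|J|-1$ (with $J=S$ corresponding to the empty simplex), and the left $W$-action is type-preserving. Hence a simplex $w\braket{J}$ is fixed by $\braket{K}$ exactly when $w^{-1}\braket{K}w\subseteq\braket{J}$, so the fixed subcomplex $\Sigma^{\braket{K}}$ has $(|S|-|J|-1)$-simplices in bijection with $\mathrm{inv}_{\braket{K}}(W/\braket{J})$, and
\[
\tilde{\chi}\bigl(\Sigma^{\braket{K}}\bigr)=(-1)^{|S|-1}\,\varphi^W_{\braket{K}}(\varepsilon_W).
\]
Because $W$ acts on the sphere $S^{|S|-1}$ via its reflection representation and $\braket{K}$ fixes the linear subspace $V^{\braket{K}}$ of codimension $|K|$, the fixed subcomplex $\Sigma^{\braket{K}}$ is homeomorphic to $S^{|S|-|K|-1}$ (the empty simplex when $K=S$), so its reduced Euler characteristic is $(-1)^{|S|-|K|-1}$. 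Comparing the two displays yields $\varphi^W_{\braket{K}}(\varepsilon_W)=(-1)^{|K|}$, as required.

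The main obstacle will be the topological identification of $\Sigma^{\braket{K}}$ as a sphere of the stated dimension. A purely combinatorial alternative, essentially Solomon's original argument in \cite{so66}, is to partition the cosets counted by $|\mathrm{inv}_{\braket{K}}(W/\braket{J})|$ according to which standard parabolic $\braket{K'}$ coincides with $w^{-1}\braket{K}w$, and then to collapse the resulting inner sum over $J\supseteq K'$ by the Boolean identity $\sum_{K'\subseteq J\subseteq S}(-1)^{|J|}=0$ for $K'\neq S$; this avoids topology but requires careful bookkeeping on parabolic double cosets. Either route recovers the asserted formula $\varphi^W_{\braket{K}}(\varepsilon_W)=(-1)^{|K|}$, which in particular shows $\varepsilon_W\in\B{W,\P_W}^\times$.
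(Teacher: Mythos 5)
The paper itself gives no proof of Lemma~\ref{Signunit}; it is simply quoted from Solomon \cite{so66}, so there is no in-paper argument to compare against. Your proof is nonetheless correct and is one of the standard routes. Two remarks. First, the reduction step should be stated as an implication rather than an equivalence used loosely: once every mark of $\varepsilon_W$ is $\pm1$, you have $\varphi^W(\varepsilon_W^2)=\varphi^W(1)$, and injectivity of the Burnside homomorphism gives $\varepsilon_W^2=1$, hence $\varepsilon_W\in\B{W,\P_W}^\times$; this is exactly consistent with the paper's remark that the unit group is an elementary abelian $2$-group. Second, the Euler-characteristic computation is right, but it rests on two facts you should make explicit: (i) for finite $W$ the geometric realization of the Coxeter complex is $W$-equivariantly homeomorphic to the unit sphere in the (faithful, $|S|$-dimensional) reflection representation, and (ii) because the $W$-action is type-preserving, a simplex stabilized by $\braket{K}$ is fixed pointwise, so the realization of the fixed subcomplex equals the fixed-point set $S^{|S|-1}\cap V^{\braket{K}}$, a sphere of dimension $|S|-|K|-1$ since the simple roots indexed by $K$ are linearly independent; with these in place the comparison of reduced Euler characteristics gives $\varphi^W_{\braket{K}}(\varepsilon_W)=(-1)^{|K|}$ exactly as you compute, including the degenerate case $K=S$. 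One caution about your sketched combinatorial alternative: $w^{-1}\braket{K}w$ need not coincide with any standard parabolic $\braket{K'}$, so the proposed partition is not literally a partition of the cosets; the ``careful bookkeeping'' you mention (passing to $\braket{J}$-conjugates of standard parabolics, or working with double cosets) is genuinely required there. Since that route is offered only as an alternative, it does not affect the validity of your main argument.
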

We call $\varepsilon_W$ a \emph{sign unit} of $\B{W,\P_W}$. 

\begin{thm}\label{theorem}
Let $(W,S)$ be a reducible finite Coxeter
system with irreducible components $(W_i,S_i)$ for
$i=1,\ldots,\ell$, so that
\[
 W=\prod_{i=1}^{\ell}W_i,
 \qquad
 S=\bigsqcup_{i=1}^{\ell}S_i.
\]
Then the following hold:
\begin{itemize}
 \item[$(1)$] The canonical homomorphism
 \[
  \prod_{i=1}^{\ell}\B{W_i,\P_{W_i}}^\times
  \longrightarrow
  \B{W,\P_W}^\times
 \]
 is surjective if and only if
 \[
  |\B{W,\P_W}^\times|
  =\frac{1}{2^{\ell-1}}
   \prod_{i=1}^{\ell}|\B{W_i,\P_{W_i}}^\times|.
 \]
 Equivalently, every unit satisfies the relations $(R)$ of
 Proposition $\ref{unit-tensor-surjectivity-criterion}$.

 \item[$(2)$] If $\varepsilon_W$ is the sign unit of
 $\B{W,\P_W}$, then
 \[
  \varepsilon_W=\prod_{i=1}^{\ell}f_i(\varepsilon_{W_i}).
 \]
\end{itemize}
\end{thm}

\begin{proof}
Every subset $J\subseteq S$ decomposes uniquely as
$J=\bigsqcup_{i=1}^{\ell}J_i$ with $J_i\subseteq S_i$, and
$\langle J\rangle=\prod_i\langle J_i\rangle$. Conjugation in
$W=\prod_{i=1}^{\ell}W_i$ is componentwise. Hence
\[
 \P_W=\prod_{i=1}^{\ell}\P_{W_i}.
\]
Assertion $(1)$ now follows from Corollary $\ref{メイン定理B}$.

Let $J\subseteq S$ and write
$J=\bigsqcup_{i=1}^{\ell}J_i$ with $J_i\subseteq S_i$. Then
\[
 \braket{J}=\prod_{i=1}^{\ell}\braket{J_i}.
\]
For $P\in\P_W$ with $(P)=(\braket{J})$, marks depend only on
the conjugacy class of the subgroup. Hence Lemma
$\ref{バーンサイド準同型の分解補題}$ gives
\[
 \begin{split}
 \varphi^W_P\left(\prod_{i=1}^{\ell}f_i(\varepsilon_{W_i})\right)
 &=\prod_{i=1}^{\ell}
   \varphi^{W_i}_{\braket{J_i}}(\varepsilon_{W_i})\\
 &=(-1)^{\sum_{i=1}^{\ell}|J_i|}
 =(-1)^{|J|}
 =\varphi^W_P(\varepsilon_W).
 \end{split}
\]
The injectivity of the Burnside homomorphism $\varphi^W$ proves
$(2)$.
\end{proof}

The following results are known for irreducible finite Coxeter groups.
\begin{thm}[\cite{io15}]
Let $W$ be an irreducible finite Coxeter group of type {\rm A}. Then
\[
|\B{W,\P_W}^\times| = 4.
\]
\end{thm}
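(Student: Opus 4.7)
The plan is to prove $|\B{W,\P_W}^\times|=4$ by establishing matching upper and lower bounds.

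\textbf{Lower bound.} I would exhibit four distinct units: $\pm 1_{\B{W}}$ and $\pm\varepsilon_W$, where $\varepsilon_W$ is the sign unit from Lemma \ref{Signunit}. For $W=W(A_n)$ with $n\geq 1$, the set $S$ is nonempty, so choosing any $s\in S$ one has $\varphi^W_{\braket{\emptyset}}(\varepsilon_W)=1$ and $\varphi^W_{\braket{s}}(\varepsilon_W)=-1$. Thus $\varepsilon_W$ is non-constant under $\varphi^W$, while $\pm 1_{\B{W}}$ are constant, so the four elements are pairwise distinct.

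\textbf{Upper bound.} Since $\varphi^W:\B{W,\P_W}\hookrightarrow\prod_{(P)\in\P_W^c}\Z$ is injective, every unit $u$ satisfies $\varphi^W_P(u)\in\{\pm 1\}$ for each parabolic conjugacy class $(P)$, and the unit group embeds into $(\Z/2)^{|\P_W^c|}$ via these signs. For $W=S_n$, parabolic conjugacy classes correspond bijectively to partitions of $n$ (Young subgroups $S_\lambda$), so a unit gives rise to a sign vector $(\epsilon_\lambda)_{\lambda\vdash n}$. The goal is to show the only sign vectors arising from units are the four corresponding to $\pm 1_{\B{W}}$ and $\pm\varepsilon_W$.

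\textbf{Strategy.} To cut down the image, I would use the tom Dieck / Yoshida-type congruences satisfied by $\varphi^W(\B{W,\P_W})$: for each parabolic $P$, there is a congruence of the form $\sum_{(P')}a_{P,P'}\varphi^W_{P'}(x)\equiv 0\pmod{|N_W(P)/P|}$ relating values at $P$ and at strictly larger parabolics, with coefficients $a_{P,P'}$ coming from the Möbius function on the parabolic lattice. Specialising to a unit $u$ and reducing modulo $2$ gives a linear relation modulo $2$ among the signs $\epsilon_\lambda$. For Young subgroups $S_\lambda\leq S_n$ with $\lambda$ having repeated parts, $N_{S_n}(S_\lambda)/S_\lambda$ is a product of symmetric groups permuting equal-sized blocks and hence has even order, so nontrivial congruences modulo $2$ are plentiful.

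\textbf{Main obstacle.} The hardest step is proving that the resulting system of linear congruences modulo $2$ has rank exactly $|\P_W^c|-2$, so that the solution space has precisely $4$ elements. I expect to handle this inductively on $n$: the behaviour of units on proper reducible parabolics of the form $S_{\lambda_1}\times\cdots\times S_{\lambda_k}$ with $k\geq 2$ is controlled by the inductive hypothesis combined with Lemma \ref{メイン定理B}, which pins down units on reducible parabolics from units on irreducible factors. This leaves only a single ``top'' congruence at $P=S_n$ itself to verify by a direct Möbius-function calculation, completing the proof.
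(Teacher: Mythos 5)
First, a point of comparison: the paper does not prove this statement at all — it is imported from \cite{io15} and used as a black box — so there is no internal proof to measure yours against; your attempt has to stand on its own. Your lower bound does: $\pm 1_{\B{W}}$ and $\pm\varepsilon_W$ are four distinct units, since by Lemma \ref{Signunit} one has $\varphi^W_{\braket{\emptyset}}(\varepsilon_W)=1$ while $\varphi^W_{\braket{s}}(\varepsilon_W)=-1$, so $\varepsilon_W$ is not $\pm 1_{\B{W}}$. That part is complete and correct.

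The upper bound is where the entire content of the theorem lives, and there your proposal has a genuine gap, in two places. (i) The congruences you invoke are the tom Dieck/Yoshida congruences characterizing the image of the mark homomorphism of the \emph{full} Burnside ring; they are supported on subgroups of the form $\braket{g,H}$ with $gH\in N_W(H)/H$, and such subgroups of $S_n$ are in general not parabolic. Hence they are not relations among the coordinates $(\varphi^W_P(x))_{(P)\in\P_W^c}$ alone, and you have neither produced nor proved the existence of a replacement system of congruences supported on parabolic marks with the shape you describe. (ii) Even granting such a system, the claim that its mod-$2$ rank is exactly $|\P_W^c|-2$ \emph{is} the theorem, and nothing in the proposal establishes it: the inductive scheme (pin down restrictions to reducible parabolics via Lemma \ref{メイン定理B}, then verify one top congruence at $S_n$ by a M\"obius computation) is a plausible outline, but the decisive top-level computation is explicitly deferred, and it is precisely the step where type-specific information must enter — note that the paper's list of types with unit group of order $4$ conspicuously excludes ${\rm H}_3$, ${\rm H}_4$ and ${\rm F}_4$, so no purely formal argument of this kind can close the case. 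For what it is worth, the cited proof of Idei--Oda avoids congruences entirely: unitriangularity of the Kostka matrix identifies $\B{S_n,\P_{S_n}}$ with the character ring of $S_n$ via $[S_n/S_\lambda]\mapsto \mathrm{Ind}_{S_\lambda}^{S_n}1$, and the unit group is then read off there from the two linear characters of $S_n$.
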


\begin{thm}[\cite{oty17}]
	Let $W$ be an irreducible finite Coxeter group. If $W$ is of type
	${\rm B}, {\rm D}, {\rm E}_6, {\rm E}_7$, or ${\rm E}_8$, then
	\[
	|\B{W,\P_W}^\times| = 4.
	\]
\end{thm}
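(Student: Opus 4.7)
The plan is to prove the upper bound $|\B{W,\P_W}^\times| \leq 4$; the matching lower bound is clear since Lemma 4.2 produces the unit $\varepsilon_W$, and the four elements $\pm 1_{\B{W}}, \pm \varepsilon_W$ are distinct (for instance $\varphi^W_W(\varepsilon_W)=(-1)^{|S|}$ distinguishes $\pm\varepsilon_W$ from $\pm 1_{\B{W}}$, and $\pm 1$ separates the remaining pair). Because the Burnside homomorphism $\varphi^W$ is injective and sends any unit $u$ to a tuple in $\{\pm 1\}^{\P_W^c}$, the task is to show that the coordinate $\varphi^W_{\braket{J}}(u)$ is determined, up to a single global sign, by the parity $|J| \bmod 2$, forcing $u \in \{\pm 1_{\B{W}}, \pm\varepsilon_W\}$.

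My first step would be to tabulate the conjugacy classes of parabolic subgroups for each of the types $\rm B, D, E_6, E_7, E_8$: for the classical types this is a combinatorial enumeration of subsets of the Dynkin diagram modulo the $W$-action on $S$, and for the exceptional types it is a finite (but long) list available from standard references. My second step would be to invoke Yoshida's description of the image of $\varphi^W$ by congruence conditions indexed by pairs $K \leq H$ of parabolic subgroups with $H/K$ cyclic (the partial-Burnside analogue of the Dress congruences). Restricted to $\{\pm 1\}$-valued coordinates these conditions become $\mathbb{Z}/2\mathbb{Z}$-linear constraints on the exponents $\epsilon_K := (1-\varphi^W_{K}(u))/2 \in \{0,1\}$, and my third step would be to solve this linear system case by case and verify that the solution space is exactly the two-dimensional subspace spanned by the all-ones vector (yielding $-1_{\B{W}}$) and by $(|J|\bmod 2)_J$ (yielding $\varepsilon_W$).

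The main obstacle is the case analysis itself. For types $\rm B_n$ and $\rm D_n$ one can hope for an inductive argument on $n$: the maximal proper parabolic subgroups are of types $\rm A_{n-1}, B_{n-1}, D_{n-1}$, so one may combine the type A result of \cite{io15} with lower-rank instances of the present theorem and the cyclic-subquotient congruences to pin down $u$. For the exceptional types $\rm E_6, E_7, E_8$ no such reduction is available, and the realistic route is direct computation: enumerate the conjugacy classes of parabolic subgroups, assemble the relevant marks together with the Dress-type congruences generated by the cyclic subquotients $H/K$, and perform Gaussian elimination over $\mathbb{Z}/2\mathbb{Z}$, almost certainly by computer algebra, to confirm that the constraint system admits no solutions beyond the expected four. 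The bulk of any honest write-up will live in verifying these exceptional cases; this is presumably the technical heart of \cite{oty17}.
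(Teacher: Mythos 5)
There is an important mismatch of expectations here: the paper does not prove this statement at all. It is quoted, with attribution, from \cite{oty17}, and no argument for it appears anywhere in the text. So there is no ``paper's own proof'' to compare your write-up against; the only question is whether your proposal stands on its own as a proof. It does not. Your overall strategy is the standard and reasonable one --- by injectivity of the Burnside homomorphism, the units of $\B{W,\P_W}$ are exactly the elements of the subring all of whose marks are $\pm 1$, so the problem reduces to a finite computation against the table of marks of $\P_W^c$, carried out type by type --- and this is surely close in spirit to what \cite{oty17} actually does. But every substantive step in your plan is deferred rather than executed: you do not enumerate the conjugacy classes of parabolic subgroups for any of the five types, you do not write down the congruences, and you explicitly leave the linear algebra (``almost certainly by computer algebra'') undone. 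A plan that ends with ``the bulk of any honest write-up will live in verifying these exceptional cases'' has not verified them, so the upper bound $|\B{W,\P_W}^\times|\le 4$ is not established for even one of the types ${\rm B}, {\rm D}, {\rm E}_6, {\rm E}_7, {\rm E}_8$.

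Two further points. First, your appeal to Dress-type congruences needs more care than you give it: the classical congruences characterize the image of the mark homomorphism on the full Burnside ring $\B{W}$, whereas here you need to decide which $\{\pm1\}$-tuples indexed by $\P_W^c$ lift to the \emph{subring} $\B{W,\P_W}$; justifying that the cyclic-subquotient conditions you cite cut out exactly this image (rather than the image of $\B{W}$ projected to the $\P_W$-coordinates) is itself a step that must be argued, not assumed. Second, a small error in your lower bound: $\varphi^W_W(\varepsilon_W)=(-1)^{|S|}$ does \emph{not} separate $\pm\varepsilon_W$ from $\pm 1_{\B{W}}$ when $|S|$ is even, which happens for ${\rm E}_6$, ${\rm E}_8$, and for ${\rm B}_n$, ${\rm D}_n$ with $n$ even; use instead the coordinate $\varphi^W_{\braket{s}}(\varepsilon_W)=-1$ at a rank-one parabolic $\braket{s}$, $s\in S$, which always works. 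The distinctness of the four elements, and hence the lower bound, is fine once this is fixed.
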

The remaining rank-two case is also elementary.
\begin{thm}
Let $W=W({\rm I}_2(m))$ be a Coxeter group of type ${\rm I}_2(m)$,
where $m\geq3$. Then
\[
 |\B{W,\P_W}^\times|=4.
\]
\end{thm}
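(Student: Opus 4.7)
The plan is a direct finite computation, since $W = W({\rm I}_2(m))$ is the dihedral group of order $2m$ and its parabolic subgroups are few and explicit. First I would enumerate the conjugacy classes of parabolic subgroups. Writing $S=\{s_1,s_2\}$, the subsets $J\subseteq S$ give $\braket{J}\in\{1,\braket{s_1},\braket{s_2},W\}$, and a standard computation in the dihedral group shows that $\braket{s_1}$ and $\braket{s_2}$ are $W$-conjugate exactly when $m$ is odd. So $|\P_W^c|=3$ if $m$ is odd and $|\P_W^c|=4$ if $m$ is even.

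Next I would write down the table of marks of $\B{W,\P_W}$ in each parity case, using the formula $\varphi^W_K([W/H])=|\{gH\in W/H : g^{-1}Kg\leq H\}|$ together with the centralizer calculation $|C_W(s_i)|=2$ for $m$ odd and $|C_W(s_i)|=4$ for $m$ even (the extra piece being the central rotation $r^{m/2}$). In both cases the resulting table is lower triangular with diagonal entries $2m$, then $1$ (odd $m$) or $2$ (even $m$) in the reflection rows, then $1$ in the $W$ row, and zeros below.

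Now I would invoke the injective Burnside homomorphism $\varphi^W:\B{W,\P_W}\hookrightarrow\prod_{(K)\in\P_W^c}\Z$, so that any unit $u$ satisfies $\varphi^W_K(u)\in\{\pm 1\}$ for each $K$. Writing $u$ in the basis $\{[W/H]\}$ and reading the mark equations from top to bottom of the triangular table, the coefficient of $[W/W]$ is forced to be $\pm 1$, then the coefficients at the reflection classes each lie in a three-element set determined by $d=\varphi^W_W(u)$, and finally the bottom equation $\varphi^W_1(u)=\pm 1$ contains a factor of $m\geq 3$ multiplying the remaining unknowns. This divisibility forces $2a+b+c=0$ (in the even case, with $a$ the coefficient at $[W/1]$), reducing the problem to a finite check with a handful of candidates. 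The check eliminates all but the four sign patterns $(\pm 1,\ldots,\pm 1)$ realized by $1$, $-1$, $\varepsilon_W$, $-\varepsilon_W$.

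The only real obstacle is the mild bookkeeping of handling the odd-$m$ and even-$m$ cases separately, since the number of basis elements differs; but in each case the triangular structure of the table of marks makes the system of mark equations solvable by back-substitution, and the hypothesis $m\geq 3$ (so that $m\nmid 1$) is precisely what rules out all sign vectors other than those coming from $\pm 1$ and $\pm\varepsilon_W$. Consequently $|\B{W,\P_W}^\times|=4$.
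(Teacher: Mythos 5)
Your computation is correct and is precisely the direct calculation the paper itself invokes without writing out ("It is easy to see... by calculation"): enumerate the conjugacy classes of parabolics in the dihedral group (three for $m$ odd, four for $m$ even), apply the injective mark homomorphism, and use the triangular table of marks together with $m\geq 3$ to force the coefficient vector to be one of the four coming from $\pm 1$ and $\pm\varepsilon_W$. No gap; this fills in the omitted verification rather than taking a different route.
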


\begin{proof}
Write
\[
 W=\langle s,t\mid s^2=t^2=(st)^m=1\rangle.
\]
If $m$ is odd, the two rank-one standard parabolic subgroups are
conjugate. Ordering the parabolic conjugacy classes by
\[
 W,\quad\langle s\rangle,\quad1
\]
and using the corresponding transitive-set basis, the mark matrix is
\[
 M_{\rm odd}=
 \begin{pmatrix}
  1&0&0\\
  1&1&0\\
  1&m&2m
 \end{pmatrix}.
\]
A sign vector $\epsilon\in\{1,-1\}^3$ lies in the image of the
Burnside homomorphism if and only if
$M_{\rm odd}^{-1}\epsilon$ is integral. A direct calculation shows
that this holds precisely for
\[
 (1,1,1),\quad(1,-1,1),\quad
 (-1,1,-1),\quad(-1,-1,-1).
\]

If $m$ is even, the subgroups $\langle s\rangle$ and
$\langle t\rangle$ represent the two conjugacy classes of reflections.
With the order
\[
 W,\quad\langle s\rangle,\quad\langle t\rangle,\quad1,
\]
the mark matrix is
\[
 M_{\rm even}=
 \begin{pmatrix}
  1&0&0&0\\
  1&2&0&0\\
  1&0&2&0\\
  1&m&m&2m
 \end{pmatrix}.
\]
The integral sign vectors are precisely
\[
 (1,1,1,1),\quad(1,-1,-1,1),\quad
 (-1,1,1,-1),\quad(-1,-1,-1,-1).
\]
In either case there are exactly four sign vectors in the image.
The unit criterion for the Burnside homomorphism therefore gives
$|\B{W,\P_W}^\times|=4$.
\end{proof}
Theorem $\ref{theorem}$, together with Corollary
$\ref{small-unit-surjectivity}$, gives the following.
\begin{cor}\label{coxeter-order-four-corollary}
Let $(W,S)$ be a reducible finite Coxeter
system with irreducible components $(W_t,S_t)$ for
$t=1,\ldots,\ell$. If
\[
 |\B{W_t,\P_{W_t}}^\times|=4
\]
for every $t$, then the following hold:
\begin{itemize}
 \item[$({\rm i})$]
 \[
  |\B{W,\P_W}^\times|=2^{\ell+1}.
 \]
 \item[$({\rm ii})$]
 \[
  \B{W,\P_W}^\times
  =\braket{-1_{\B{W}},f_1(\varepsilon_{W_1}),\ldots,
  f_\ell(\varepsilon_{W_\ell})},
 \]
 where $\varepsilon_{W_i}$ is the sign unit of
 $\B{W_i,\P_{W_i}}$.
\end{itemize}
\end{cor}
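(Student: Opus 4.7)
The plan is to exploit the fact that the Coxeter-theoretic direct product splits the collection of parabolics, namely $\P_W = \prod_{i=1}^{\ell} \P_{W_i}$, and then appeal directly to the general machinery of Section 3. Any $J \subseteq S$ decomposes uniquely as $J = \bigsqcup_{i=1}^{\ell} J_i$ with $J_i = J \cap S_i$, and because the subsystems $W_i$ commute elementwise inside $W$, $\braket{J} = \prod_{i=1}^{\ell} \braket{J_i}$; together with the fact that $W$-conjugation acts componentwise on such products, this identifies the parabolic subgroups of $W$ with the products of parabolic subgroups of the $W_i$.

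Granted this structural identification, part $(1)$ is an immediate application of Lemma $\ref{メイン定理B}$ with $G_i = W_i$ and $\D_i = \P_{W_i}$, since the hypothesis $\prod \D_i = \P_W$ is exactly what we just observed.

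For part $(2)$, I would compare the two sides by evaluating the Burnside homomorphism coordinate by coordinate and invoking its injectivity. Let $P \in \P_W$ and choose $J \subseteq S$ with $(P) = (\braket{J})$; writing $J = \bigsqcup J_i$ we may take $P = \prod \braket{J_i}$. Lemma $\ref{Signunit}$ gives $\varphi^W_P(\varepsilon_W) = (-1)^{|J|}$. On the other hand, since $\varphi^W$ is a ring homomorphism and Lemma $\ref{バーンサイド準同型の分解補題}$ computes each factor, one obtains
\[
\varphi^W_P\Bigl(\prod_{i=1}^{\ell} f_i(\varepsilon_{W_i})\Bigr) \;=\; \prod_{i=1}^{\ell} \varphi^{W_i}_{\braket{J_i}}(\varepsilon_{W_i}) \;=\; \prod_{i=1}^{\ell} (-1)^{|J_i|} \;=\; (-1)^{|J|}.
\]
Since $\varepsilon_W$ and $\prod_{i=1}^{\ell} f_i(\varepsilon_{W_i})$ agree under every coordinate of the Burnside homomorphism, and $\varphi^W$ is injective on $\B{W,\P_W}$, they must coincide.

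The only genuine content of the proof is the structural identification $\P_W = \prod \P_{W_i}$; once that is in hand, both claims fall out of the tensor-product and Burnside-homomorphism results already established in Section 3, and I do not anticipate any real obstacle beyond notational bookkeeping of the decomposition $J = \bigsqcup J_i$.
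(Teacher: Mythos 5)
Your identification $\P_W=\prod_{i=1}^{\ell}\P_{W_i}$ and the deduction of (i) from Lemma~\ref{メイン定理B} follow exactly the paper's route: with $|\B{W_t,\P_{W_t}}^\times|=4$ for each $t$, the lemma gives $|\B{W,\P_W}^\times|=4^{\ell}/2^{\ell-1}=2^{\ell+1}$. The problem is your treatment of the second assertion. What you prove under ``part (2)'' is the factorization $\varepsilon_W=\prod_{i=1}^{\ell}f_i(\varepsilon_{W_i})$; that is statement (2) of Theorem~\ref{theorem}, not statement (ii) of this corollary. Statement (ii) claims that $-1_{\B{W}},f_1(\varepsilon_{W_1}),\cdots,f_\ell(\varepsilon_{W_\ell})$ generate \emph{all} of $\B{W,\P_W}^\times$. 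Your argument only shows that the single element $\varepsilon_W$ lies in the subgroup they generate; it gives neither the reverse inclusion nor the independence of the listed units, so (ii) is not established.

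To close the gap you need the ``in particular'' clause of Lemma~\ref{メイン定理B}. First check that it applies: since $|\B{W_i,\P_{W_i}}^\times|=4$ and $\varepsilon_{W_i}\neq\pm 1_{\B{W_i}}$ (by Lemma~\ref{Signunit}, $\varphi^{W_i}_{\{1\}}(\varepsilon_{W_i})=1$ while $\varphi^{W_i}_{\braket{s}}(\varepsilon_{W_i})=-1$ for $s\in S_i$, and $S_i\neq\emptyset$ because each $(W_i,S_i)$ is irreducible), the unit group $\B{W_i,\P_{W_i}}^\times$ equals $\braket{-1_{\B{W_i}},\varepsilon_{W_i}}$; that is, the hypothesis of the clause holds with $r_i=1$ and $u^{(i)}_1=\varepsilon_{W_i}$, and the clause then yields (ii) directly. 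If you prefer to argue from scratch, the missing step is an independence count: the $\ell+1$ listed units are independent in the elementary abelian $2$-group $\B{W,\P_W}^\times$, because evaluating the Burnside homomorphism at the subgroups $\{1\}$ and $\{1\}\times\cdots\times\braket{s}\times\cdots\times\{1\}$ (with $s\in S_j$, $j=1,\cdots,\ell$) via Lemma~\ref{バーンサイド準同型の分解補題} produces $\ell+1$ linearly independent sign patterns; hence they generate a subgroup of order $2^{\ell+1}$, which by (i) is the whole unit group.
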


\begin{proof}
	Put
	\[
	A_t:=\B{W_t,\P_{W_t}}
	\qquad (t=1,\ldots,\ell),
	\]
	and write
	\[
	\varepsilon_t:=\varepsilon_{W_t}.
	\]
	
	We first show that
	\[
	\operatorname{Idem}(A_t)=\{0,1\}
	\]
	for every $t$. By Lemma~\ref{Signunit}, $\varepsilon_t$ is a unit of $A_t$
	distinct from $\pm1_{A_t}$. Since
	\[
	|A_t^\times|=4,
	\]
	we have
	\[
	A_t^\times
	=
	\{1_{A_t},-1_{A_t},\varepsilon_t,-\varepsilon_t\}
	=
	\langle-1_{A_t},\varepsilon_t\rangle.
	\]
	
	Let $e\in A_t$ be an idempotent. Then
	\[
	u:=1_{A_t}-2e
	\]
	is a unit of $A_t$, since
	\[
	u^2=(1-2e)^2=1_{A_t}.
	\]
	Moreover,
	\[
	u-1_{A_t}\in 2A_t.
	\]
	Since the coefficient of $[W_t/1]$ in $\varepsilon_t$ is
	$1$, we have
	\[
	\pm\varepsilon_t-1_{A_t}\notin 2A_t.
	\]
	Therefore,
	\[
	u\in\{1_{A_t},-1_{A_t}\}.
	\]
	As $A_t$ is torsion-free, $u=1_{A_t}$ gives $e=0$, while $u=-1_{A_t}$ gives
	$e=1$. Therefore
	\[
	\operatorname{Idem}(A_t)=\{0,1\}.
	\]
	
	Corollary~\ref{small-unit-surjectivity} now shows that the canonical
	homomorphism
	\[
	\prod_{t=1}^{\ell}A_t^\times
	\longrightarrow
	\B{W,\P_W}^\times
	\]
	is surjective. Hence Corollary~\ref{メイン定理B} gives
	\[
	|\B{W,\P_W}^\times|
	=
	\frac{1}{2^{\ell-1}}
	\prod_{t=1}^{\ell}|A_t^\times|
	=
	\frac{4^\ell}{2^{\ell-1}}
	=
	2^{\ell+1}.
	\]
	This proves ${\rm (i)}$.
	
	Finally,
	\[
	A_t^\times=\langle-1_{A_t},\varepsilon_t\rangle
	\]
	for every $t$. Therefore, by Corollary~\ref{メイン定理B},
	\[
	\B{W,\P_W}^\times
	=
	\left\langle
	-1_{\B{W}},
	f_1(\varepsilon_{W_1}),\ldots,
	f_\ell(\varepsilon_{W_\ell})
	\right\rangle.
	\]
	This proves ${\rm (ii)}$.
\end{proof}


\begin{thebibliography}{99}
	\bibitem{Bo07} Bouc, S.: \emph{The functor of units of Burnside rings for $p$-groups}, Comment. Math. Helv. \textbf{82} (2007), 583--615.
	
	\bibitem{io15} Idei, H. and Oda, F.: \emph{The table of marks, the Kostka matrix, and the character table of the symmetric group}, J. Algebra {\bf 429} (2015), 318--323.
	
	\bibitem{ma82} Matsuda, T.: \emph{On the unit groups of Burnside rings}, Japan. J. Math. (N.S.) {\bf 8} (1982), no. 1, 71--93.
	
	\bibitem{oty16} Oda, F., Takegahara, Y., and Yoshida, T.: \emph{The units of a partial Burnside ring relative to the Young subgroups of a symmetric group}, J. Algebra {\bf 460} (2016), 370--379.
	
	\bibitem{oty17} Oda, F., Takegahara, Y., and Yoshida, T.: \emph{Lefschetz invariants and Young characters for representations of the
	hyperoctahedral groups}, J. Algebra {\bf 512} (2018), 1--19.
	
	\bibitem{ow} Oda, F. and Wakatake, M.: \emph{The unit group of a partial Burnside ring of a reducible Coxeter group of type A}, Hokkaido Math. J. {\bf 48} (2019), no.~2, 345--356.
	
	\bibitem{so66} Solomon, L.: \emph{The orders of the finite Chevalley groups}, J. Algebra {\bf 3} (1966), 376--393.
	
	\bibitem{so76} Solomon, L.: \emph{A Mackey formula in the group ring of a Coxeter group}, J. Algebra {\bf 41} (1976), 255--264.

	\bibitem{Di79} tom Dieck, T.: \emph{Transformation Groups and Representation Theory}, Lecture Notes in Mathematics, \textbf{766}, Springer, Berlin, 1979.

	\bibitem{yo90a} Yoshida, T.: \emph{The generalized Burnside ring of a finite group}, Hokkaido
	Math. J. {\bf 19} (1990), 509--574. 
	
	\bibitem{yo90b} Yoshida, T.: \emph{On the unit groups of Burnside rings}, J. Math. Soc. Japan \textbf{42} (1990), no.~1, 31--64.
\end{thebibliography}
\end{document}